\renewcommand{\Re}{\operatorname{Re}}
  \theoremstyle{plain}
  \newtheorem{lem}{\protect\lemmaname}
   \newenvironment{proof}[1][\proofname]{\par
     \normalfont\topsep6\p@\@plus6\p@\relax
     \trivlist
     \itemindent\parindent
     \item[\hskip\labelsep
           \scshape
       #1]\ignorespaces
   }{%
     \endtrivlist\@endpefalse
   }
   \providecommand{\proofname}{Proof}
  \theoremstyle{plain}
  \newtheorem{prop}{\protect\propositionname}
  \theoremstyle{plain}
  \newtheorem{thm}{\protect\theoremname}
\providecommand{\lemmaname}{Lemma}
\providecommand{\propositionname}{Proposition}
\providecommand{\theoremname}{Theorem}
\begin{document}

\title{On the duality between consensus problems and Markov processes, with
application to delay systems}

\author{Fatihcan M. Atay \\Department of Mathematics, Bilkent University, 06800 Ankara, Turkey}

\date{Preprint. Published in \href{http://math-mprf.org/journal/articles/id1433}{\emph{Markov Processes and Related Fields,} 22(3):537--553, 2016.}}

\maketitle

\begin{abstract}
We consider consensus of multi-agent systems as a dual problem to
Markov processes. Based on an exchange of relevant notions and results
between the two fields, we present a uniform framework which admits
the introduction and treatment of time delays in a common setting.
We study both information propagation and information processing delays,
and for each case derive conditions for reaching consensus and calculate
the consensus value.\bigskip{}

\end{abstract}
\textbf{AMS Subject Classification Numbers:} 34D06, 34K06, 60J10, 60J27\\
\textbf{Keywords: }Consensus, synchronization, multi-agent systems,
Markov chain, Markov process, delay, stability\\

\section{Consensus and synchronization problems}

\label{sec:intro} A class of coupled dynamical systems, sometimes
referred to as \emph{cooperative systems}, deals with individual units
that try to coordinate their actions through local interactions. One
of the simplest examples is the linear \emph{consensus problem }\cite{Ren2005survey,Olfati2007survey}\emph{,}
which can be described by a set of differential equations in continuous
time $t\in\mathbb{R}$, 
\begin{equation}
\dot{x}_{i}(t)=\sum_{j\in V}a_{ij}(x_{j}(t)-x_{i}(t)),\quad i\in V,\label{cont}
\end{equation}
or difference equations in discrete time $t\in\mathbb{N}$,
\begin{equation}
x_{i}(t+1)=x_{i}(t)+\sum_{j\in V}a_{ij}(x_{j}(t)-x_{i}(t)),\quad i\in V.\label{disc}
\end{equation}
The name is suggestive of a group of people (here indexed by $i\in V$)
negotiating to reach an agreement; however, the model appears also
in other contexts, as mentioned below. In these models $V$ is a finite
set of size $n$, so we take $V=\{1,2,\dots,n\}$, although some of
the results in this paper also extend to a countable $V$. The scalar
$x_{i}(t)$ denotes the ``opinion'' of person $i$ at time $t$,
which is updated (continuously or at regular intervals, respectively,
in \eqref{cont} and \eqref{disc}) according to the interaction scheme
given by the numbers $a_{ij}\ge0$. Thus, the individuals try to adjust
their opinion to match those of the others they interact with (their
neighbors), whereas in the absence of interaction ($a_{ij}=0$ $\forall i,j$)
they would maintain their states. The interactions may differ in strength,
measured by $a_{ij}$, may be asymmetric ($a_{ij}\neq a_{ji}$) or
unidirectional ($a_{ij}>0$ while $a_{ji}=0$)%
\footnote{The reader will notice that one could as well set $a_{ii}=0$ $\forall i$
without affecting the equations \eqref{cont}--\eqref{disc}. However,
the terms $a_{ii}$ may have significance when time delays are introduced
into the model, as in Section \ref{sec:delays}.%
}. The positivity of the $a_{ij}$ is what makes the system \emph{cooperative}
\cite{Olfati2007survey}\emph{. }Indeed, (\ref{cont})--(\ref{disc})
are based on the familiar \emph{negative feedback principle} of control
theory applied pairwise to the individuals: the discrepancies in opinions
of the neighbors act as a forcing term towards reducing the discrepancies.
Since individuals have different neighborhoods, and they may thus
all be changing their states in different ways, it is not clear whether
everybody will eventually agree on a common opinion. In fact, it is
well known from control theory that aggressive feedback schemes (i.e.,
too large values of the ``gains'' $a_{ij})$ may lead to overshoots
and oscillations instead of convergence to the desired value. We say
that the system (\ref{cont}) or (\ref{disc})\emph{ reaches} \emph{consensus}
if for any set of initial conditions there exists some $c\in\mathbb{R}$
such that $\lim_{t\to\infty}x_{i}(t)=c$ for all $i$. The number
$c$ is then called the \emph{consensus value}.

Equations (\ref{cont})--(\ref{disc}) arise in several applications
through linearization of nonlinear systems near appropriate operational
points. For example, in simple models used in microscopic traffic
dynamics, the motion of the $i$th vehicle may be described by \cite{Chandler58,Sipahi10survey}
\begin{equation}
\dot{v}_{i}(t)=\sum_{j=1}^{n}\hat{a}_{ij}h(v_{j}(t)-v_{i}(t)),\label{traffic}
\end{equation}
where $v_{i}$ denotes the speed and the left hand side is the acceleration
of vehicle $i$. In a typical single-lane car-following scenario,
the speed of vehicle $i$ is determined by the vehicle in front, $i-1$,
so that $\hat{a}_{ij}>0$ if $j=i-1$ and zero otherwise. The desired
behavior, to avoid accidents, is that all vehicles travel at the same
common speed $c$, which may vary depending on changes in road and
traffic conditions. The function $h$ is typically monotone increasing
and differentiable; furthermore $h(0)=0$ so that traveling at the
common speed $c$ is an equilibrium configuration of the system. The
stability of such configurations is found by the linear variational
equation, which is of the form (\ref{cont}) with $x_{i}(t)=v_{i}(t)-c$
and $a_{ij}=\hat{a}_{ij}h'(0)$. 

The consensus problem is also closely related to synchronization problems.
In fact, the two notions coincide in the setting of identical phase
oscillators, which is described by \cite{Kuramoto84b} 
\begin{equation}
\dot{\theta}_{i}(t)=\omega+\sum\hat{a}_{ij}h(\theta_{j}(t)-\theta_{i}(t)),\label{kuramoto}
\end{equation}
where $\theta_{i}\in S^{1}$ is the phase of the $i$th oscillator
whose intrinsic frequency is $\omega$, and $h:S^{1}\to\mathbb{R}$
has properties similar to the $h$ appearing in the traffic problem
(\ref{traffic}). A synchronous solution has the form $\theta_{i}(t)=\Omega t+c$
$\forall i$, for some common frequency $\Omega$ (in this case equal
to $\omega$) and phase $c$. The stability of such synchronous solutions
can be studied by the linear variational equation, which again has
the form (\ref{cont}) with $x_{i}(t)=\theta_{i}(t)-\Omega t-c$ and
$a_{ij}=\hat{a}_{ij}h'(0)$. Note that the procedure involves moving
to a co-rotating frame with frequency $\Omega$, which maps the synchronous
solution to an equilibrium point, thereby reducing the problem of
synchronization to one of consensus. 

Although the consensus problems described above are completely deterministic,
there are intimate relations to Markov processes. In fact, one of
the first publications in this area studied the concept of reaching
consensus in terms of probability distributions \cite{DeGroot}. We
will be exploiting the connections further in this paper. After giving
a general overview of consensus analysis for (\ref{cont})--(\ref{disc})
in Section~\ref{sec:undelayed}, we will observe connections to Markov
processes in Section~\ref{sec:markov}, which show that the two problems
correspond to dynamics evolving on dual spaces. In Section~\ref{sec:delays}
we introduce time delays into the consensus problem. Indeed, in many
real-life problems the transmission and processing of information
are not instantaneous; therefore, the models mentioned above need
to be amended to include delays. We distinguish between two main sources
of delays, namely \emph{information} \emph{propagation} and \emph{information}
\emph{processing} delays, which are separately studied in Sections~\ref{sec:propagation}
and \ref{sec:processing}, respectively. We allow the dynamics to
depend on past history in a general way by using distributed delays,
obtain conditions for convergence to consensus, and explicitly calculate
the consensus value. We conclude with some brief remarks on several
related problems.

\section{Convergence to consensus}

\label{sec:undelayed} For notation, we let $\mathcal{D}(c,r)=\{z\in\mathbb{C}:|z-c|\le r\}$
denote the closed disc in the complex plane centered at $c$ and having
radius $r$. The set of real-valued functions on the index set $V=\{1,2,\dots,n\}$
is denoted $\mathcal{V}$; it is naturally isomorphic to the vector
space $\mathbb{R}^{n}$. The span of the vector $\mathbf{1}:=(1,\dots,1)^{\top}\in\mathbb{R}^{n}$
corresponds to the subspace of constant functions on $V$. We denote
by $A=[a_{ij}]$ the matrix of interaction weights, which are assumed
to be nonnegative. Let $d_{i}=\sum_{j\in V}a_{ij}$ be the total incoming
weights to unit $i$ (the \emph{in-degree}), and $D=\mathrm{diag}(d_{1},\dots,d_{n})$
be the diagonal matrix of in-degrees. We let $\Delta=\max_{i}\{d_{i}-a_{ii}\}$.\emph{
}Denoting $x=(x_{1},x_{2},\dots,x_{n})^{\top}\in\mathbb{R}^{n}$,
\eqref{cont} can be written in vector form as
\begin{equation}
\dot{x}(t)=-Lx(t)\label{cont-vec}
\end{equation}
where $L=D-A$. We will refer to $L$ as the \emph{Laplacian matrix.
}Some basic properties of $L$ are given next. 
\begin{lem}
\label{lem:L} The Laplacian matrix $L=D-A$ satisfies the following.
\begin{enumerate}
\item $L$ has zero row sums. It always has zero as an eigenvalue, and the
corresponding eigenvector is $\mathbf{1}=(1,\dots,1)^{\top}$.
\item All eigenvalues of $L$ belong to the disc $\mathcal{D}(\Delta,\Delta):=\{z\in\mathbb{C}:|z-\Delta|\le\Delta\}$
in the complex plane. Consequently, nonzero eigenvalues of $L$ have
strictly positive real parts.
\end{enumerate}
\end{lem}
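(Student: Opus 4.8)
The plan is to prove the two items in order, using only elementary linear algebra and the definitions of $D$, $A$, $L$, and $\Delta$.

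For item 1, I would first observe that the $i$th row sum of $L=D-A$ is $d_i-\sum_{j\in V}a_{ij}=d_i-d_i=0$ by the very definition $d_i=\sum_{j\in V}a_{ij}$. Having zero row sums is precisely the statement $L\mathbf{1}=\mathbf{0}$, since the $i$th entry of $L\mathbf{1}$ is exactly the $i$th row sum of $L$. Hence $\mathbf{1}$ is an eigenvector of $L$ with eigenvalue $0$, and in particular $0\in\operatorname{spec}(L)$. This step is routine and I expect no obstacle.

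For item 2, the natural tool is Gershgorin's circle theorem. Let $\lambda$ be any eigenvalue of $L$. Gershgorin places $\lambda$ in the union over $i$ of the discs centered at the diagonal entry $L_{ii}=d_i-a_{ii}$ with radius $\sum_{j\neq i}|L_{ij}|=\sum_{j\neq i}a_{ij}$ (here I use $a_{ij}\ge 0$, so the off-diagonal entries of $L$ are $-a_{ij}$ with absolute value $a_{ij}$). Now $\sum_{j\neq i}a_{ij}=d_i-a_{ii}$, so the $i$th Gershgorin disc is $\mathcal{D}(d_i-a_{ii},\,d_i-a_{ii})$, a disc through the origin with the nonnegative real number $d_i-a_{ii}$ as the far point on the real axis. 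Each such disc is contained in $\mathcal{D}(\Delta,\Delta)$: indeed a disc $\mathcal{D}(\rho,\rho)$ with $0\le\rho\le\Delta$ satisfies, for any $z$ in it, $|z-\Delta|\le|z-\rho|+|\rho-\Delta|\le\rho+(\Delta-\rho)=\Delta$. Therefore every eigenvalue of $L$ lies in $\mathcal{D}(\Delta,\Delta)$.

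The final consequence is immediate: every point $z\in\mathcal{D}(\Delta,\Delta)$ has $\Re z\ge 0$ (since $|z-\Delta|\le\Delta$ forces $\Re z\ge 0$), with $\Re z=0$ only at $z=0$; hence any eigenvalue with nonzero value has strictly positive real part. I do not anticipate a serious obstacle here — the only point requiring a moment's care is checking that $L_{ii}=d_i-a_{ii}$ can be negative is impossible, which holds because $d_i=\sum_j a_{ij}\ge a_{ii}$ as all weights are nonnegative, so each Gershgorin center $\rho=d_i-a_{ii}$ indeed lies in $[0,\Delta]$ and the containment argument applies.
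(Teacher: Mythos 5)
Your proposal is correct and follows essentially the same route as the paper: the row-sum computation for item 1 and Gershgorin's circle theorem for item 2, with your explicit triangle-inequality check that each disc $\mathcal{D}(d_i-a_{ii},\,d_i-a_{ii})$ lies inside $\mathcal{D}(\Delta,\Delta)$ being just a spelled-out version of the paper's observation that the discs are nested. No gaps.
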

\begin{proof}
By the definition of degree, $d_{i}-\sum_{j=1}^{n}a_{ij}=0$. So,
$L$ has zero row sums and the first statement of the lemma is obtained.
The second statement follows by an application of Gershgorin's circle
theorem \cite{Horn-Johnson}, which says that the eigenvalues of $L=[\ell_{ij}]$
are contained in the union of discs $\bigcup{}_{i=1}^{n}\mathcal{D}(\ell_{ii},\sum_{j,j\neq i}|\ell_{ij}|)=\bigcup{}_{i=1}^{n}\mathcal{D}(d_{i}-a_{ii},d_{i}-a_{ii})$.
Since these discs are ordered with respect to inclusion, their union
equals the largest one, namely $\mathcal{D}(\Delta,\Delta)$. Clearly,
$\mathcal{D}(\Delta,\Delta)$ belongs to the right complex half-plane
and intersects the imaginary axis precisely at the origin.
\end{proof}
The general solution to \eqref{cont-vec} is
\begin{equation}
x(t)=e^{-L}{}^{t}x(0),\quad t\ge0.\label{cont-sol}
\end{equation}
The solutions can be expressed as sums of eigenmodes, all of which
converge to zero by Lemma \ref{lem:L}, except those corresponding
to the zero eigenvalue. Thus, the following condition will be significant
for convergence to the subspace $\mathrm{span}(\mathbf{1})$.
\begin{quote}
\textbf{(H1)} Zero is a simple eigenvalue of the Laplacian $L$.
\end{quote}
Indeed, let $\{u_{1},\dots,u_{n}\}$ be a basis for $\mathcal{V}$
consisting of the (generalized) eigenvectors of $L$, and let $\{\pi^{1},\dots,\pi^{n}\}$
be the dual basis, i.e. a basis for the dual space $\mathcal{V}^{*}$
such that $\left\langle \pi^{i},u_{j}\right\rangle =\delta_{j}^{i}$.
Thus, if $U=[\begin{array}{c|c|c}
u_{1} & \cdots & u_{n}\end{array}]$ denotes the matrix whose columns are the vectors $u_{i}$, then the
$\pi^{i}$ are the rows of $U^{-1}$. By the first statement of Lemma
\ref{lem:L} we can set $u_{1}=\mathbf{1}$ as the eigenvector corresponding
to the zero eigenvalue. We also use $\pi^{*}:=\pi^{1}$ to denote
the corresponding dual vector, i.e., the left eigenvector of $L$
related to the zero eigenvalue. If (H1) holds, then in the basis $\{u_{i}\}$
the operator $L$ has the form
\[
U^{-1}LU=\left[\begin{array}{cc}
0 & 0\\
0 & \tilde{L}
\end{array}\right]
\]
where $\tilde{L}$ is a $(n-1)\times(n-1)$ matrix all of whose eigenvalues
have strictly positive real parts. It follows that $\lim_{t\to\infty}U^{-1}e^{-Lt}U$
exists and equals the diagonal matrix 
\begin{equation}
\left[\begin{array}{cccc}
1\\
 & 0\\
 &  & \ddots\\
 &  &  & 0
\end{array}\right].\label{limiting-matrix}
\end{equation}
Therefore,
\begin{equation}
\lim_{t\to\infty}e^{-Lt}=U\left[\begin{array}{cccc}
1\\
 & 0\\
 &  & \ddots\\
 &  &  & 0
\end{array}\right]U^{-1}=\mathbf{1}\pi^{*}.\label{limit-exponential}
\end{equation}
Using in \eqref{cont-sol} gives
\begin{equation}
\lim_{t\to\infty}x(t)=\left\langle \pi^{*},x(0)\right\rangle \mathbf{1}.\label{limiting-value}
\end{equation}
Hence we conclude that, under the condition (H1), the system \eqref{cont-vec}
reaches consensus from arbitrary initial conditions $x(0)$, with
the consensus value given by $c=\left\langle \pi^{*},x(0)\right\rangle $. 

Although (H1) appears so far only as a sufficient condition for reaching
consensus, it is not hard to see that it is also a necessary condition.
Indeed, (H1) can fail in two ways: If the geometric multiplicity of
the zero eigenvalue is equal to its algebraic multiplicity and greater
than one, then there exists an additional eigenvector $u_{2}$ linearly
independent from $\mathbf{1}$ corresponding to eigenvalue zero, so
that initial conditions $x(0)$ belonging to the invariant subspace
$\mathrm{span}(u_{2})$ satisfy $\dot{x}(t)=0$ $\forall t\ge0$ and
so do not converge to $\mathrm{span}(\mathbf{1})$. On the other hand,
if the geometric multiplicity is less than the algebraic multiplicity,
then the solution operator $e^{-Lt}$ diverges, which can be seen
by considering the exponential of the Jordan block
\[
\exp\left(\left[\begin{array}{cccc}
0 & 1\\
 & 0 & \ddots\\
 &  & \ddots & 1\\
 &  &  & 0
\end{array}\right]t\right)=\left[\begin{array}{ccccc}
1 & t & t^{2}/2 & \cdots & t^{n-1}/(n-1)!\\
 & 1 & t &  & \vdots\\
 &  & 1 & \ddots & t^{2}/2\\
 &  &  & \ddots & t\\
 &  &  &  & 1
\end{array}\right].
\]

The analysis of the discrete-time system \eqref{disc} follows the
same lines, except that convergence requires an additional condition.
Hence, \eqref{disc} in vector form 
\begin{equation}
x(t+1)=(I-L)x(t)\label{disc-vec}
\end{equation}
has the solution 
\begin{equation}
x(t)=(I-L)^{t}x(0),\quad t\in\mathbb{N}.\label{disc-sol}
\end{equation}
Consider the following inequality:
\begin{quote}
\textbf{(H2) $\Delta<1$.}
\end{quote}
If (H2) is satisfied, Lemma \ref{lem:L} implies that $I-L$ has all
eigenvalues inside the unit circle except for one eigenvalue at unity,
which, under condition (H1) is a simple eigenvalue. Thus, $\lim_{t\to\infty}U^{-1}(I-L)^{t}U$
exists and is given by the matrix \eqref{limiting-matrix}, which
yields $\lim_{t\to\infty}(I-L)^{t}=\mathbf{1}\pi^{*}$. Using in \eqref{disc-sol}
yields \eqref{limiting-value}, i.e. consensus to the common value
$\left\langle \pi^{*},x(0)\right\rangle $, as before.

\section{Relation to Markov processes}

\label{sec:markov} Although the consensus and synchronization problems
given in Section \ref{sec:intro} are deterministic models, there
is a close connection to stochastic systems that we will exploit.
The next result indicates the basic relation.
\begin{prop}
\label{thm:stoc} For the Laplacian $L=D-A$, the matrix exponential
$e^{-Lt}$ is a stochastic matrix for all $t\ge0$; that is, its elements
are nonnegative and each of its rows sums to 1. Similarly, $P=I-L$
is a stochastic matrix whenever $\Delta\le1$. If moreover $\Delta<1$,
then 1 is the only eigenvalue of $P$ on the unit circle.\end{prop}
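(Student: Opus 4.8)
The plan is to establish the three assertions of Proposition~\ref{thm:stoc} in turn, leveraging Lemma~\ref{lem:L} and elementary facts about matrix exponentials and stochastic matrices. For the first claim, I would note that $e^{-Lt}\mathbf{1} = e^{-Lt}\mathbf{1}$; since $L\mathbf{1}=0$ by part~1 of Lemma~\ref{lem:L}, the series $e^{-Lt}\mathbf{1} = \sum_{k\ge0}(-t)^k L^k\mathbf{1}/k! = \mathbf{1}$, so each row of $e^{-Lt}$ sums to $1$. For nonnegativity, the standard trick is to write $L = \Delta I - M$ where $M = \Delta I - L$ has all entries nonnegative (the off-diagonal entries of $-L$ are the $a_{ij}\ge0$, and the diagonal entries $\Delta - (d_i - a_{ii}) + a_{ii} = \Delta - d_i + 2a_{ii}$... so one should instead take $M$ to be the matrix with entries $a_{ij}$ off-diagonal and $a_{ii} + \Delta - d_i$ on the diagonal, which is nonnegative since $d_i - a_{ii}\le\Delta$). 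Then $e^{-Lt} = e^{-\Delta t} e^{Mt} = e^{-\Delta t}\sum_{k\ge0} (Mt)^k/k!$, a nonnegative scalar times a sum of nonnegative matrices, hence nonnegative. This combination of row-sum~$1$ and entrywise nonnegativity is exactly the definition of a stochastic matrix.

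For the second claim, observe that $P=I-L$ has row sums equal to $\mathbf{1} - L\mathbf{1} = \mathbf{1}$, again by part~1 of Lemma~\ref{lem:L}. For nonnegativity: the off-diagonal entries of $P$ are $a_{ij}\ge0$, and the diagonal entries are $1 - (d_i - a_{ii})\ge 1 - \Delta \ge 0$ precisely when $\Delta\le1$. So $P$ is stochastic in that regime. For the third claim, suppose $\Delta<1$ and let $\mu$ be an eigenvalue of $P$; then $1-\mu$ is an eigenvalue of $L$, so by part~2 of Lemma~\ref{lem:L} either $1-\mu = 0$ (i.e. $\mu=1$) or $1-\mu\in\mathcal{D}(\Delta,\Delta)\setminus\{0\}$. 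In the latter case $|1-\mu-\Delta|\le\Delta$, which places $\mu$ in the disc $\mathcal{D}(1-\Delta,\Delta)$; since $\Delta<1$ this disc lies in the open right half plane at distance $1-2\Delta$... more to the point, its farthest point from the origin is at $|1-\Delta|+\Delta$, and I want $|\mu|<1$. The cleanest route: $\mu = 1 - \nu$ with $\nu\in\mathcal{D}(\Delta,\Delta)$, so $|\mu|^2 = |1-\nu|^2 = 1 - 2\Re\nu + |\nu|^2$; and $\nu\in\mathcal{D}(\Delta,\Delta)$ forces $|\nu|^2 \le 2\Delta\Re\nu$ (expand $|\nu-\Delta|^2\le\Delta^2$), hence $|\mu|^2 \le 1 - 2\Re\nu + 2\Delta\Re\nu = 1 - 2(1-\Delta)\Re\nu \le 1$, with equality only if $\Re\nu=0$, which on $\mathcal{D}(\Delta,\Delta)$ forces $\nu=0$. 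Thus $\mu=1$ is the only eigenvalue on the unit circle.

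I would present these as three short paragraphs mirroring the three sentences of the statement. The only mild subtlety—the ``main obstacle,'' such as it is—is getting the shift matrix $M$ in the exponential argument genuinely entrywise nonnegative; one has to be slightly careful that $\Delta$ is defined as $\max_i\{d_i - a_{ii}\}$ rather than $\max_i d_i$, so that the diagonal correction $a_{ii} + \Delta - d_i$ is nonnegative. Everything else is a direct application of Lemma~\ref{lem:L} together with the elementary geometry of the Gershgorin disc $\mathcal{D}(\Delta,\Delta)$, exactly as used in the convergence analysis of Section~\ref{sec:undelayed}. No result beyond Lemma~\ref{lem:L} and the power-series definition of $e^{-Lt}$ is needed.
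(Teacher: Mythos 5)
Your proof is correct and follows essentially the same route as the paper: your decomposition $L=\Delta I-M$ is precisely the paper's uniformization $P_{\varepsilon}=I-\varepsilon L$ with $\varepsilon=\Delta^{-1}$ (indeed $M=\Delta P_{1/\Delta}$, and $M\mathbf{1}=\Delta\mathbf{1}$), so $e^{-Lt}=e^{-\Delta t}e^{Mt}$ is the same nonnegative-series argument, and the second claim is handled identically. For the third claim you give an explicit algebraic verification that $\nu\mapsto 1-\nu$ maps $\mathcal{D}(\Delta,\Delta)$ into the closed unit disc meeting the unit circle only at $1$, where the paper merely points back to the remark following (H2); both versions rest on part 2 of Lemma~\ref{lem:L}, so the content is the same.
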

\begin{proof}
Define $P_{\varepsilon}=I-\varepsilon L$ for $\varepsilon\ge0$.
By the first statement of Lemma \ref{lem:L}, $P_{\varepsilon}$ has
row sums equal to 1. Moreover, its off-diagonal entries are $p_{ij}=\varepsilon a_{ij}\ge0$,
and the diagonal entries are 
\[
p_{ii}=1-\varepsilon(d_{i}-a_{ii})\ge1-\varepsilon\Delta,
\]
which are nonnegative if $\varepsilon\Delta\le1$. Thus, $P_{\varepsilon}$
is a stochastic matrix for $0\le\varepsilon\le\Delta^{-1}$. In particular,
taking $\varepsilon=1$ proves the second statement of the proposition.
The last statement regarding the case $\Delta<1$ was already observed
following the condition (H2). To prove the first statement, fix some
$\varepsilon\le\Delta^{-1}$ and consider 
\begin{equation}
e^{-\varepsilon Lt}=e^{(P_{e}-I)t}=e^{P_{\varepsilon}t}e^{-t}=e^{-t}\sum_{k=0}^{\infty}\frac{t^{k}}{k!}P_{\varepsilon}^{k}.\label{exp-series}
\end{equation}
Since for $t\ge0$ all terms in the summation are nonnegative, the
components $e^{-\varepsilon Lt}$ are also nonnegative. Furthermore,
\[
e^{-\varepsilon Lt}\mathbf{1}=e^{-t}\sum_{k=0}^{\infty}\frac{t^{k}}{k!}P_{\varepsilon}^{k}\mathbf{1}=e^{-t}\sum_{k=0}^{\infty}\frac{t{}^{k}}{k!}\mathbf{1}=\mathbf{1},
\]
showing that $e^{-\varepsilon Lt}$ has row sums equal to 1. Hence,
$e{}^{-\varepsilon Lt}$ is a stochastic matrix. Since $t\ge0$ is
arbitrary, the first statement of the proposition follows. 
\end{proof}
For $\Delta\le1$, the matrix $P=I-L$ can thus be seen as the transition
matrix of a Markov chain on $n$ states indexed by $V$. A probability
distribution $\pi$ on $V$ (i.e. a row vector whose components are
nonnegative and sum to 1) evolves by 
\begin{equation}
\pi(t+1)=\pi(t)P.\label{markov}
\end{equation}
The iteration rule \eqref{markov} describes an evolution on the space
$\mathcal{V}^{*}$, as a dual dynamics to the discrete-time consensus
problem \eqref{disc-vec}. Conditions for the convergence of Markov
chains are well studied, and based on the Perron-Frobenius theory,
are usually expressed as the transition matrix $P$ having the property
\emph{SIA} (stochastic, indecomposable, and aperiodic) \cite{Doob,Wolfowitz63}.
In particular, if 1 is is a simple eigenvalue of $P$ and there are
no other eigenvalues on the unit circle, then $\lim_{t\to\infty}P^{t}$
exists and equals a rank-one matrix with identical rows, namely $1\pi^{*}$,
where the common row $\pi^{*}$ satisfies $\pi^{*}=\pi^{*}P$ and
is the stationary distribution of the Markov process. Thus, $\lim_{t\to\infty}\pi(t)=\pi^{*}$
regardless of the initial distribution $\pi(0)$. Since the eigenvalue
1 for $P$ corresponds to eigenvalue 0 for $L$, the relation to the
consensus condition (H1) is clear. In particular, the left eigenvector
$\pi^{*}$ of $L$ corresponding to the zero eigenvalue, under the
normalization $\left\langle \pi^{*},\mathbf{1}\right\rangle =1$,
is the unique stationary distribution of $P$, and thus has nonnegative
entries.

The continuous-time consensus problem \eqref{cont-vec} is also naturally
related to a continuous-time Markov process $\{X(t):t\ge0\}$: Given
$L$, we write $-L=\varepsilon^{-1}(P_{\varepsilon}-I)$ where $P_{\varepsilon}=I-\varepsilon L$
and $\varepsilon>0$ is chosen sufficiently small so that $P_{\varepsilon}$
is a stochastic matrix, as in the proof of Proposition \ref{thm:stoc}.
Then $-L$ is the generator of a Markov process with transition probability
matrix $P_{\varepsilon}$ and jump rate $\varepsilon^{-1}.$ Indeed,
the semigroup of matrices defined by $P(t)=e^{-Lt}$ yield the transition
probabilities $(P(t))_{ij}=\mathbb{P}(X(t)=j\,|\, X(0)=i)$, thereby
specifying the process completely. Under the condition (H1) and by
\eqref{limit-exponential}, we have $\lim_{t\to\infty}P(t)=\mathbf{1\pi^{*}}$,
where $\pi^{*}$ is the left eigenvector of $L$ corresponding to
the zero eigenvalue (subject to the normalization $\left\langle \pi^{*},\mathbf{1}\right\rangle =1$),
or the stationary distribution for $P_{\varepsilon}$, as well as
for $P(t)$ $\forall t\ge0$, since $\pi^{*}e^{-Lt}$=$\pi^{*}$ by
\eqref{exp-series}. 

In both discrete and continuous time, the consensus value $\left\langle \pi^{*},x(0)\right\rangle =\sum_{i\in V}\pi_{i}^{*}x_{i}(0)$
given in (\ref{limiting-value}) now has a clear interpretation: \emph{The
consensus value is the mean value of the initial conditions $x_{i}(0)$
with respect to the stationary distribution $\pi^{*}$.} This is a
consequence of the fact that $\left\langle \pi^{*},x(t)\right\rangle $
is a conserved quantity of the dynamics; to wit,
\begin{equation}
\frac{d}{dt}\left\langle \pi^{*},x(t)\right\rangle =\left\langle \pi^{*},\dot{x}(t)\right\rangle =-\pi^{*}Lx(t)=0,\quad t\ge0.\label{eq:conserved}
\end{equation}
Similarly, in discrete time, $\left\langle \pi^{*},x(t)\right\rangle =\left\langle \pi^{*},x(0)\right\rangle $
$\forall t\in\mathbb{N}$. Combined with the knowledge of converging
to consensus, $x(t)\to c\mathbf{1}$, the consensus value is obtained
as $c=\left\langle \pi^{*},x(0)\right\rangle $. 

We conclude this section by giving another expression for the limit
distribution of Markov processes in terms of the Laplacian matrix,
which will be useful in the next section. The notation $\mathrm{adj}(L)$
denotes the \emph{adjugate} (or the classical adjoint) of $L$, i.e.,
the transpose of the matrix of cofactors.
\begin{lem}
\label{lem:adj} If zero is a simple eigenvalue of the Laplacian $L$,
then $\mathrm{adj}(L)=\alpha(\mathbf{1}\pi^{*})$ for some $\alpha\neq0$.\end{lem}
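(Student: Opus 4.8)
The plan is to build everything on the classical adjugate identity $L\,\mathrm{adj}(L)=\mathrm{adj}(L)\,L=\det(L)\,I$, which in our situation reads $L\,\mathrm{adj}(L)=\mathrm{adj}(L)\,L=0$, since $\det(L)=0$ because zero is an eigenvalue of $L$. The relation $L\,\mathrm{adj}(L)=0$ says that every column of $\mathrm{adj}(L)$ lies in $\ker L$, while $\mathrm{adj}(L)\,L=0$ says that every row of $\mathrm{adj}(L)$ lies in the left kernel of $L$. By Lemma~\ref{lem:L} and the hypothesis that zero is a \emph{simple} eigenvalue, each of these two subspaces is one-dimensional, spanned by $\mathbf{1}$ and by $\pi^{*}$ respectively. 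Hence $\mathrm{adj}(L)$ is simultaneously of the form $\mathbf{1}\,r$ for some row vector $r$ and of the form $s\,\pi^{*}$ for some column vector $s$; comparing the two representations (all rows of $\mathbf{1}\,r$ equal $r$, and each must be a multiple of $\pi^{*}$) forces $r$ to be a scalar multiple of $\pi^{*}$, so that $\mathrm{adj}(L)=\alpha\,\mathbf{1}\pi^{*}$ for some scalar $\alpha$.

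The remaining point is that $\alpha\neq0$, i.e. that $\mathrm{adj}(L)$ does not vanish. For this I would use the standard fact that $\mathrm{rank}\,\mathrm{adj}(L)$ equals $n$, $1$, or $0$ according to whether $\mathrm{rank}\,L$ equals $n$, $n-1$, or at most $n-2$. Since zero is a simple eigenvalue of $L$, its algebraic multiplicity is $1$, hence so is its geometric multiplicity, so $\dim\ker L=1$ and $\mathrm{rank}\,L=n-1$; therefore $\mathrm{adj}(L)$ has rank exactly $1$, in particular $\mathrm{adj}(L)\neq0$ and $\alpha\neq0$. Alternatively, one can pin down $\alpha$ explicitly: taking the trace in $\mathrm{adj}(L)=\alpha\,\mathbf{1}\pi^{*}$ and using the normalization $\langle\pi^{*},\mathbf{1}\rangle=1$ yields $\alpha=\mathrm{tr}\,\mathrm{adj}(L)$, which is the sum of the principal $(n-1)\times(n-1)$ minors of $L$, equivalently the product of the $n-1$ nonzero eigenvalues of $L$, hence nonzero.

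The only place that calls for a little care is the passage from ``zero is a simple eigenvalue'' to ``$\mathrm{rank}\,L=n-1$'': one has to recall that ``simple'' refers to algebraic multiplicity and invoke the inequality (geometric multiplicity) $\le$ (algebraic multiplicity) to conclude that the null space is genuinely one-dimensional. Beyond that, the argument is just two applications of elementary linear algebra — the adjugate identity and the rank behaviour of the adjugate — so I expect the proof to be quite short.
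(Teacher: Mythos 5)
Your argument is correct and follows essentially the same route as the paper: the adjugate identities $L\,\mathrm{adj}(L)=\mathrm{adj}(L)\,L=0$ place the columns of $\mathrm{adj}(L)$ in $\ker L=\mathrm{span}(\mathbf{1})$ and its rows in the left kernel $\mathrm{span}(\pi^{*})$, and the nonvanishing of $\mathrm{adj}(L)$ comes from $\mathrm{rank}\,L=n-1$. Your extra remarks (the rank trichotomy of the adjugate and the identification of $\alpha$ with the product of the nonzero eigenvalues) only make explicit what the paper leaves implicit.
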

\begin{proof}
By assumption, $L$ has rank $n-1$ and a one-dimensional kernel,
which is spanned by $\mathbf{1}$. This implies that $\mathrm{adj}(L)\neq0$.
Furthermore, since $L\cdot\mathrm{adj}(L)=(\det L)I=0$, all columns
of $\mathrm{adj}(L)$ lie in the kernel of $L$. Thus, $\mathrm{adj}(L)=\mathbf{1}u$
for some unique row vector $u$. Similarly, considering the product
$\mathrm{adj}(L)\cdot L=0$, one sees that all rows of $\mathrm{adj}(L)$
must lie in the one-dimensional subspace spanned by the left eigenvector
of $L$ corresponding to the zero eigenvalue. Hence, $u=\alpha\pi^{*}$
for some scalar $\alpha$; so $\mathrm{adj}(L)=\alpha(\mathbf{1}\pi^{*})$.
Since $\mathrm{adj}(L)\neq0$, $\alpha$ is nonzero. 
\end{proof}

\section{Time delays and history dependence}

\label{sec:delays} There are two basic ways time delays can arise
in consensus problem (\ref{cont}). The system
\begin{equation}
\dot{x}_{i}(t)=\sum_{j\in V}a_{ij}\left(x_{j}(t-\tau)-x_{i}(t-\tau)\right)\label{proc-delay-scalar}
\end{equation}
involves \emph{information processing delays}, whereas 
\begin{equation}
\dot{x}_{i}(t)=\sum_{j\in V}a_{ij}\left(x_{j}(t-\tau)-x_{i}(t)\right)\label{prop-delay-scalar}
\end{equation}
contains \emph{information propagation delays}. In the former, the
individual $i$ requires a certain time $\tau\ge0$ for processing
the information on the state difference $x_{j}-x_{i}$ to update its
state, whereas in the latter the information requires some time $\tau$
to travel from $j$ to $i$. An example for processing delays would
be the traffic model \eqref{traffic}, where $\tau$ represents the
reaction time of the drivers \cite{SIAP07}. Examples of information
propagation delays include networks of oscillators, such as \eqref{kuramoto}. 

The models \eqref{proc-delay-scalar} and \eqref{prop-delay-scalar}
depict the dependence of the process on its present state at time
$t$ and its past state at time $t-\tau$. It is also possible to
consider a more general dependence on the history of states over the
interval $[t-\tau,t]$. Using the fact that any linear functional
on $C([-\tau,0],\mathbb{R})$ can be represented by a Stieltjes integral,
we can extend \eqref{proc-delay-scalar} to model a general dependence
on system's history:
\begin{equation}
\dot{x}_{i}(t)=\sum_{j\in V}a_{ij}\int_{-\tau}^{0}\left(x_{j}(t+\theta)-x_{i}(t+\theta)\right)\, d\eta(\theta),\label{proc-distdelay-scalar}
\end{equation}
where $\eta:[-\tau,0]\to\mathbb{R}$ is a function of bounded variation.
Such an $\eta$ is usually said to represent \emph{distributed delays.
}The special cases \eqref{proc-delay-scalar}--\eqref{prop-delay-scalar}
are obtained when $\eta$ is a Heaviside step function, corresponding
to a \emph{discrete delay}. In order to keep the cooperative character
of interactions ($a_{ij}\ge0)$, we stipulate that $\eta$ be a nondecreasing
function. In a similar way, \eqref{prop-delay-scalar} can be generalized
to
\begin{equation}
\dot{x}_{i}(t)=\sum_{j\in V}a_{ij}\left(\int_{-\tau}^{0}x_{j}(t+\theta)\, d\eta(\theta)-x_{i}(t)\right).\label{prop-distdelay-scalar}
\end{equation}
To admit consensus solutions, i.e. for $x_{i}(t)\equiv c$ $\forall i$
to satisfy \eqref{prop-distdelay-scalar}, we must additionally have
the normalization condition
\begin{equation}
\int_{-\tau}^{0}d\eta(\theta)=1.\label{normalization}
\end{equation}
In this sense $\eta$ can be thought as a probability distribution,
although the problem comes, again, from a completely deterministic
setting. The mean of the distribution $\eta$ will be referred to
as the mean delay,
\begin{equation}
\bar{\tau}:=-\int_{-\tau}^{0}\theta\, d\eta(\theta).\label{eq:meandelay}
\end{equation}

In vector form, systems \eqref{proc-distdelay-scalar}--\eqref{prop-distdelay-scalar}
can be written as

\begin{equation}
\dot{x}(t)=-L\int_{-\tau}^{0}x(t+\theta)\, d\eta(\theta)\label{proc-distdelay-vec}
\end{equation}
and
\begin{equation}
\dot{x}(t)=-Dx(t)+A\int_{-\tau}^{0}x(t+\theta)\, d\eta(\theta),\label{prop-distdelay-vec}
\end{equation}
respectively. Both \eqref{proc-distdelay-vec} and \eqref{prop-distdelay-vec}
represent infinite-dimensional systems even when $V$ is a finite
set. We let $\mathcal{C}=C([-\tau,0],\mathbb{R}^{n})$ denote the
Banach space of continuous functions mapping the interval $[-\tau,0]$
to $\mathbb{R}^{n}$, equipped with the sup norm. The Cauchy problem
for \eqref{proc-distdelay-vec} or \eqref{prop-distdelay-vec} requires
the initial condition $\phi$ be specified on the whole interval $[-\tau,0]$,
and the solution operator $T(t):\mathcal{C}\to\mathcal{C}$ maps $\phi$
to the solution at time $t$ in the sense
\[
x^{t}=T(t)\phi,\quad t\ge0,
\]
where $x^{t}\in\mathcal{C}$ denotes the function defined by $x^{t}(\theta)=x(t+\theta)$
for $s\in[-\tau,0]$. The infinite-dimensional setting requires its
own tools; in particular, the methods of Section \ref{sec:undelayed}
do not apply. Nevertheless, in the next sections we will draw upon
the foregoing ideas to carry out the analysis and give a unified picture.

\section{Information propagation delays}

\label{sec:propagation} We consider the continuous-time system 
\begin{equation}
\dot{x}(t)=-Dx(t)+A\int_{-\tau}^{0}x(t+\theta)\, d\eta(\theta).\label{prop-distdelay-vec-1}
\end{equation}
We assume that there are no delays in any self-connections; thus the
diagonal elements of $A$ are zero. The characteristic equation can
be found by seeking exponential solutions of the form $x(t)=e^{st}v$,
where $v\in\mathbb{R}^{n}$ and $s\in\mathbb{C}$ is the characteristic
value. Substituting into the equation shows that the characteristic
value $s$ satisfies 
\begin{equation}
\chi(s):=\det(sI+D-F(s)A)=0,\label{eq:chareq}
\end{equation}
where
\begin{equation}
F(s)=\int_{-\tau}^{0}e^{s\theta}\, d\eta(\theta).\label{F-Laplace}
\end{equation}
Note that $F(0)=1$ by \eqref{normalization}. Thus, $s=0$ is always
a characteristic value since $\chi(0)=\det(L)=0$. It is also easy
to see that 
\begin{equation}
F'(0)=-\bar{\tau}.\label{eq:F'(0)}
\end{equation}
The next result is essential for convergence to consensus.
\begin{prop}
\label{prop:conv} If zero is a simple eigenvalue of $L$, then $s=0$
is a simple characteristic value of \eqref{prop-distdelay-vec-1},
and all other characteristic values have negative real parts.\end{prop}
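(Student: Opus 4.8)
The plan is to establish the two assertions separately: the simplicity of $s=0$ as a zero of $\chi$, and the strict negativity of the real parts of all remaining characteristic values. For the first I would differentiate $\chi$ at the origin and exploit the rank-one structure of $\mathrm{adj}(L)$ supplied by Lemma~\ref{lem:adj}; for the second I would run a maximum-modulus (Gershgorin-type) estimate on a null vector of $sI+D-F(s)A$, together with the elementary bound $|F(s)|\le 1$ on the closed right half-plane.

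First I would write $M(s)=sI+D-F(s)A$, so that $\chi(s)=\det M(s)$ and $M(0)=L$, hence $\chi(0)=\det L=0$. By Jacobi's formula $\chi'(0)=\operatorname{tr}\!\bigl(\mathrm{adj}(L)\,M'(0)\bigr)$, and since $M'(s)=I-F'(s)A$ with $F'(0)=-\bar\tau$ by \eqref{eq:F'(0)}, we get $M'(0)=I+\bar\tau A$. Lemma~\ref{lem:adj} gives $\mathrm{adj}(L)=\alpha\,\mathbf{1}\pi^{*}$ with $\alpha\neq 0$, so by cyclicity of the trace
\[
\chi'(0)=\alpha\,\pi^{*}(I+\bar\tau A)\mathbf{1}=\alpha\bigl(\langle\pi^{*},\mathbf{1}\rangle+\bar\tau\,\pi^{*}A\mathbf{1}\bigr).
\]
Now $L\mathbf{1}=0$ yields $A\mathbf{1}=D\mathbf{1}$, so $\pi^{*}A\mathbf{1}=\sum_i\pi_i^{*}d_i$, while $\langle\pi^{*},\mathbf{1}\rangle=1$ by the chosen normalization; since $\bar\tau\ge 0$ and $\pi^{*}$ has nonnegative entries (being the stationary distribution under the hypothesis), the parenthesized factor is $\ge 1>0$. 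Therefore $\chi'(0)=\alpha\cdot(\text{positive})\neq 0$, which is precisely simplicity of the characteristic value $0$.

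Next I would take any characteristic value $s$ with $\Re(s)\ge 0$ and a nonzero $v\in\mathbb{C}^{n}$ with $\bigl(sI+D-F(s)A\bigr)v=0$, and choose an index $k$ with $|v_k|=\max_j|v_j|>0$. The $k$-th equation reads $(s+d_k)v_k=F(s)\sum_{j}a_{kj}v_j$; taking absolute values and using $a_{kj}\ge 0$, $|v_j|\le|v_k|$ and $\sum_j a_{kj}=d_k$ gives $|s+d_k|\le|F(s)|\,d_k$. On $\Re(s)\ge 0$ one has $|e^{s\theta}|=e^{\theta\Re(s)}\le 1$ for $\theta\in[-\tau,0]$, so monotonicity of $\eta$ together with the normalization \eqref{normalization} yields $|F(s)|\le\int_{-\tau}^{0}d\eta(\theta)=1$, whence $|s+d_k|\le d_k$. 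But for $\Re(s)\ge 0$ and $d_k\ge 0$,
\[
|s+d_k|^{2}=(\Re(s)+d_k)^{2}+(\operatorname{Im}(s))^{2}\ge d_k^{2},
\]
with equality only if $s=0$. Hence $s=0$, so every other characteristic value lies strictly in the left half-plane.

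The two modulus estimates are routine once set up; the step that genuinely uses the hypothesis and needs a little care is the computation of $\chi'(0)$ --- recognizing that it is controlled by $\mathrm{adj}(L)$, which collapses to the rank-one matrix $\alpha\mathbf{1}\pi^{*}$ exactly because $0$ is a simple eigenvalue, and then checking that the resulting scalar cannot vanish. Here the sign information ($\bar\tau\ge 0$, $\pi^{*}\ge 0$, $d_i\ge 0$) rules out an accidental cancellation, with the normalization $\langle\pi^{*},\mathbf{1}\rangle=1$ making the dominant term nonzero. (For the consensus conclusion one would also want the characteristic roots bounded away from the imaginary axis, but that is not asserted here and follows from the general spectral theory of retarded functional differential equations.)
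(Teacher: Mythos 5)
Your proposal is correct and follows essentially the same route as the paper: the location of the nonzero roots is obtained from $|F(s)|\le 1$ on $\Re(s)\ge 0$ via a Gershgorin-type bound (you simply unpack Gershgorin's theorem into the elementary maximum-modulus row estimate, which the paper invokes as a black box), and simplicity of $s=0$ is obtained from Jacobi's formula together with $\mathrm{adj}(L)=\alpha\mathbf{1}\pi^{*}$ from Lemma~\ref{lem:adj}. Your trace computation uses cyclicity and $A\mathbf{1}=D\mathbf{1}$ instead of the paper's substitution $A=D-L$ with $\mathrm{adj}(L)L=0$, but this yields the same value $\chi'(0)=\alpha\bigl(1+\bar{\tau}\left\langle \pi^{*},\mathbf{d}\right\rangle \bigr)$ and the same nonvanishing argument.
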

\begin{proof}
We first claim that, if $\chi(s)=0$ for some $s\in\mathbb{C}$ with
$\Re(s)\ge0$, then $s=0$. Thus suppose $\chi(s)=0$ and $\Re(s)\ge0$.
Then $-s$ is an eigenvalue of the matrix $D-F(s)A$. By Gershgorin's
theorem, the eigenvalues of $D-F(s)A$ belong to the union of discs
$\mathcal{D}(c_{i},r_{i})$, where $c_{i}=d_{i}$ and $r_{i}=|F(s)|\sum_{j,j\neq i}a_{ij}\le\sum_{j,j\neq i}a_{ij}=d_{i}$,
since $a_{ii}=0$ and $|F(s)|\le1$ by assumption. Hence the eigenvalues
are contained in $\mathcal{D}(\Delta,\Delta)\subset\{z\in\mathbb{C}:\Re(z)>0\}\cup\{0\}$.
Since it was assumed that $-s$ is an eigenvalue and $\Re(-s)\le0$,
it must be the case that $s=0$. This proves the claim. It also shows
that all nonzero roots of $\chi$ have strictly negative real parts. 

To complete the proof it remains to show that zero is a simple root
of $\chi$ if (H1) is satisfied. We establish this by evaluating $\chi'(0)$.
Recall that, by Jacobi's rule, the derivative of the determinant of
a matrix function $B(s)$ is given by
\[
\frac{d}{ds}\det B(s)=\mathrm{tr}[\mathrm{adj}B(s)\cdot B'(s)]
\]
where ``tr'' denotes the trace of a matrix. Thus, recalling \eqref{eq:F'(0)},
\begin{align*}
\chi'(0) & =\mathrm{tr}[\mathrm{adj}(L)\cdot(I+\bar{\tau}A)].\\
 & =\mathrm{tr}[\mathrm{adj}(L)\cdot(I+\bar{\tau}(D-L)]\\
 & =\mathrm{tr}[\mathrm{adj}(L)\cdot(I+\bar{\tau}D)],
\end{align*}
since $\mathrm{adj}(L)\cdot L=(\det L)I=0$. Letting $\hat{\ell}_{ii}$
denote the diagonal elements of $\mathrm{adj}(L)$, we thus have 
\[
\chi'(0)=\sum_{i=1}^{n}\hat{\ell}_{ii}(1+\bar{\tau}d_{ii}).
\]
By Lemma \ref{lem:adj}, $\hat{\ell}_{ii}=\alpha\pi_{i}^{*}$ for
some $\alpha\neq0$, which finally yields
\[
\chi'(0)=\alpha\sum_{i=1}^{n}\pi_{i}^{*}(1+\bar{\tau}d_{ii}).
\]
Notice that the sum on the right is strictly positive since all the
quantities under the summation are nonnegative, and the $\pi_{i}^{*}$
are not all zero, since $\pi^{*}$ is a probability distribution.
Thus, $\chi'(0)\neq0$, proving that 0 is a simple root of $\chi$. 
\end{proof}
Proposition \ref{prop:conv} implies that, in case zero is a simple
Laplacian eigenvalue, all solutions of \eqref{prop-distdelay-vec-1}
asymptotically approach the one-dimensional subspace spanned by $\mathbf{1}$
(now interpreted as an element of $\mathcal{C}$). This is a consequence
of the fact that all solutions of the linear equation \eqref{prop-distdelay-vec-1}
involve a factor $e^{st}$, where $s$ is a root of the characteristic
equation \eqref{eq:chareq}, which, by the Proposition \ref{prop:conv},
all have negative real parts, except for the single root $s=0$. The
latter root corresponds, by \eqref{eq:chareq} and \eqref{normalization},
to the kernel of $D-A=L$, which is spanned by $\mathbf{1}$. For
an alternative proof based on the direct solution of \eqref{prop-distdelay-vec-1},
the reader is referred to \cite{rsta13}. Note that the dynamics on
the subspace $\mathrm{span}(\mathbf{1})$ is constant, i.e., $\dot{x}(t)\equiv0$
by \eqref{prop-distdelay-vec-1}; hence the system reaches consensus.

We next establish a conserved quantity for the dynamics (cf.~\eqref{eq:conserved}
for the undelayed case). 
\begin{prop}
\label{thm:conserved} The quantity 
\[
q=\left\langle \pi^{*},x(t)+D\int_{-\tau}^{0}\int_{t+\theta}^{t}x(s)\, ds\, d\eta(\theta)\right\rangle 
\]
 is conserved under the dynamics of \eqref{prop-distdelay-vec-1},
i.e., $\frac{dq}{dt}=0$ along any solution $x(t)$ of \eqref{prop-distdelay-vec-1}.\end{prop}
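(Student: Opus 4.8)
The plan is to differentiate $q$ directly with respect to $t$ and show the derivative vanishes, using the equation of motion \eqref{prop-distdelay-vec-1} together with the fact that $\pi^{*}$ is a left null vector of $L$. Write $q=\langle\pi^{*},x(t)\rangle+\langle\pi^{*},Dw(t)\rangle$, where $w(t):=\int_{-\tau}^{0}\int_{t+\theta}^{t}x(s)\,ds\,d\eta(\theta)$. The first term contributes $\langle\pi^{*},\dot{x}(t)\rangle$. For the second, I would first treat the inner integral: by the fundamental theorem of calculus, $\frac{d}{dt}\int_{t+\theta}^{t}x(s)\,ds=x(t)-x(t+\theta)$, which is legitimate since any solution $x$ of \eqref{prop-distdelay-vec-1} is continuous on $[-\tau,\infty)$ (and $C^{1}$ on $(0,\infty)$). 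Differentiating under the Stieltjes integral then gives $\dot{w}(t)=\int_{-\tau}^{0}\bigl(x(t)-x(t+\theta)\bigr)\,d\eta(\theta)=x(t)-\int_{-\tau}^{0}x(t+\theta)\,d\eta(\theta)$, where the last step uses the normalization \eqref{normalization}.

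Combining the two contributions,
\[
\frac{dq}{dt}=\left\langle \pi^{*},\;\dot{x}(t)+Dx(t)-D\int_{-\tau}^{0}x(t+\theta)\,d\eta(\theta)\right\rangle .
\]
Now substitute $\dot{x}(t)=-Dx(t)+A\int_{-\tau}^{0}x(t+\theta)\,d\eta(\theta)$ from \eqref{prop-distdelay-vec-1}: the $Dx(t)$ terms cancel, and the remaining integral terms combine into $(A-D)\int_{-\tau}^{0}x(t+\theta)\,d\eta(\theta)=-L\int_{-\tau}^{0}x(t+\theta)\,d\eta(\theta)$. Hence
\[
\frac{dq}{dt}=-\pi^{*}L\int_{-\tau}^{0}x(t+\theta)\,d\eta(\theta)=0,
\]
because $\pi^{*}L=0$ by definition of $\pi^{*}$ as the left eigenvector of $L$ for the zero eigenvalue (whose existence is guaranteed by the first statement of Lemma \ref{lem:L}; notably, simplicity of the zero eigenvalue is not needed for this conservation law).

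The algebra here is short; the only point requiring care is the justification for differentiating the iterated integral, i.e.\ exchanging $d/dt$ with $\int_{-\tau}^{0}(\cdot)\,d\eta(\theta)$ and applying Leibniz's rule to the inner integral. I would dispatch this by observing that $(t,\theta)\mapsto\int_{t+\theta}^{t}x(s)\,ds$ is jointly continuous with $t$-partial derivative $x(t)-x(t+\theta)$, which is continuous and bounded uniformly in $\theta\in[-\tau,0]$ on each compact $t$-interval; since $\eta$ has bounded variation, dominated convergence with respect to the finite measure $|d\eta|$ then licenses the interchange. Everything else is purely algebraic, relying only on \eqref{prop-distdelay-vec-1}, the normalization \eqref{normalization}, and $\pi^{*}L=0$.
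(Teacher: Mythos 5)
Your proof is correct and follows essentially the same route as the paper: differentiate $q$, use the fundamental theorem of calculus on the inner integral together with the normalization \eqref{normalization}, substitute the equation of motion, and kill the remaining term with $\pi^{*}L=0$ (the paper phrases this as $\pi^{*}D=\pi^{*}A$, which is the same identity). Your added justification of the interchange of $d/dt$ with the Stieltjes integral, and the remark that simplicity of the zero eigenvalue is not needed, are both accurate refinements of what the paper leaves implicit.
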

\begin{proof}
By direct calculation using \eqref{prop-distdelay-vec-1} and \eqref{normalization},
\begin{multline*}
\frac{d}{dt}\left\langle \pi^{*},x(t)+D\int_{-\tau}^{0}\int_{t+\theta}^{t}x(s)\, ds\, d\eta(\theta)\right\rangle =\\
\left\langle \pi^{*},\dot{x}(t)\right\rangle +\left\langle \pi^{*},D\int_{-\tau}^{0}(x(t)-x(t+\theta))\, d\eta(\theta)\right\rangle \\
=\left\langle \pi^{*},\dot{x}(t)\right\rangle +\left\langle \pi^{*},Dx(t)\right\rangle -\left\langle \pi^{*},D\int_{-\tau}^{0}x(t+\theta))\, d\eta(\theta)\right\rangle =0,
\end{multline*}
where in the last line we have used the fact that $\pi^{*}D=\pi^{*}A$
since $\pi^{*}L=\pi^{*}(D-A)=0$. 
\end{proof}
Combining the above, we obtain our main result for consensus under
propagation delays. 
\begin{thm}
\label{thm:main} If zero is a simple eigenvalue of the Laplacian
$L$, then the system \eqref{prop-distdelay-vec-1} reaches consensus
from arbitrary initial conditions and the consensus value is given
by 
\begin{equation}
c=\frac{1}{1+\bar{\tau}\left\langle \pi^{*},\mathbf{d}\right\rangle }\left\langle \pi^{*},x(0)+D{\displaystyle \int_{-\tau}^{0}\int_{\theta}^{0}}x(s)\, ds\, d\eta(\theta)\right\rangle ,\label{c-value-distributed}
\end{equation}
where $\mathbf{d}=D\mathbf{1}=(d_{1},\dots,d_{n})^{\top}$ is the
degree vector and $\bar{\tau}$ is the mean delay.\end{thm}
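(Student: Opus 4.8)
The plan is to read off consensus directly from Proposition~\ref{prop:conv} and then to pin down the value $c$ by equating the conserved quantity $q$ of Proposition~\ref{thm:conserved} at $t=0$ and in the limit $t\to\infty$. For the consensus part, Proposition~\ref{prop:conv} gives, under (H1), a spectrum consisting of a simple root at $s=0$ together with roots of strictly negative real part; hence every solution converges to the one-dimensional center subspace $\mathrm{span}(\mathbf{1})\subset\mathcal{C}$, and since the restriction of \eqref{prop-distdelay-vec-1} to that subspace is the zero vector field, there is a scalar $c$ with $x(t)\to c\mathbf{1}$ as $t\to\infty$. Moreover this convergence is exponential, because the remaining spectrum is bounded away from the imaginary axis; I will use this fact below.

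To identify $c$, I evaluate $q$ twice. At $t=0$ the inner integral $\int_{t+\theta}^{t}x(s)\,ds$ becomes $\int_{\theta}^{0}x(s)\,ds$, so $q$ equals precisely the bracketed expression on the right-hand side of \eqref{c-value-distributed}. Next I let $t\to\infty$: then $x(t)\to c\mathbf{1}$, and in the double-integral term the inner integration runs over the window $[t+\theta,t]$ of length $-\theta\le\tau$, on which $x(s)$ is uniformly close to $c\mathbf{1}$ once $t$ is large. Consequently
\[
D\int_{-\tau}^{0}\int_{t+\theta}^{t}x(s)\,ds\,d\eta(\theta)\;\longrightarrow\;D\int_{-\tau}^{0}(-\theta)\,c\mathbf{1}\,d\eta(\theta)=\bar{\tau}\,c\,\mathbf{d},
\]
using the definition \eqref{eq:meandelay} of the mean delay and $\mathbf{d}=D\mathbf{1}$. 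Hence $q\to\bigl\langle\pi^{*},\,c\mathbf{1}+\bar{\tau}c\,\mathbf{d}\bigr\rangle=c\bigl(\langle\pi^{*},\mathbf{1}\rangle+\bar{\tau}\langle\pi^{*},\mathbf{d}\rangle\bigr)=c\,(1+\bar{\tau}\langle\pi^{*},\mathbf{d}\rangle)$, where the last step uses the normalization $\langle\pi^{*},\mathbf{1}\rangle=1$.

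Since $q$ is independent of $t$ by Proposition~\ref{thm:conserved}, the two evaluations coincide, giving
\[
\Bigl\langle\pi^{*},\;x(0)+D\int_{-\tau}^{0}\int_{\theta}^{0}x(s)\,ds\,d\eta(\theta)\Bigr\rangle=c\,(1+\bar{\tau}\langle\pi^{*},\mathbf{d}\rangle).
\]
All entries of $\pi^{*}$ and $\mathbf{d}$ are nonnegative and $\langle\pi^{*},\mathbf{1}\rangle=1$, so $1+\bar{\tau}\langle\pi^{*},\mathbf{d}\rangle\ge 1>0$ and we may divide to obtain \eqref{c-value-distributed}. The only step requiring care is the interchange of limit and integration in the double-integral term; this is harmless here precisely because Proposition~\ref{prop:conv} guarantees an exponentially fast approach of $x(t)$ to $c\mathbf{1}$, which dominates the finite-length inner integrals uniformly in $t$, so no subtler argument is needed.
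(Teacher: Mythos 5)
Your proposal is correct and follows essentially the same route as the paper: consensus is deduced from Proposition~\ref{prop:conv}, and the value $c$ is obtained by evaluating the conserved quantity $q$ of Proposition~\ref{thm:conserved} at $t=0$ and in the limit $t\to\infty$, then solving. Your added justifications (exponential decay to legitimize passing the limit inside the double integral, and positivity of $1+\bar{\tau}\langle\pi^{*},\mathbf{d}\rangle$ before dividing) are sensible refinements of steps the paper treats implicitly, not a different argument.
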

\begin{proof}
By Proposition \ref{prop:conv} and the subsequent arguments, $\lim_{t\to\infty}x(t)=c\mathbf{1}$.
Using the conserved quantity $q$ given in Proposition \ref{thm:conserved}
and the continuity of the scalar product, we have
\begin{align*}
q & =\left\langle \pi^{*},x(0)+D\int_{-\tau}^{0}\int_{\theta}^{0}x(s)\, ds\, d\eta(\theta)\right\rangle \\
 & =\left\langle \pi^{*},c\mathbf{1}+D\int_{-\tau}^{0}\int_{\theta}^{0}c\mathbf{1}\, ds\, d\eta(\theta)\right\rangle \\
 & =c+c\left\langle \pi^{*},-D\mathbf{1}\int_{-\tau}^{0}\theta\, d\eta(\theta)\right\rangle \\
 & =c(1+\bar{\tau}\left\langle \pi^{*},D\mathbf{1}\right\rangle ),
\end{align*}
where we have recalled the definition \eqref{eq:meandelay} of $\bar{\tau}$.
Solving for $c$ establishes the theorem. 
\end{proof}
For the case when there are no delays ($\bar{\tau}=0$), the consensus
value given in Theorem~\ref{thm:main} obviously reduces to the one
of Section~\ref{sec:undelayed}: $c=\left\langle \pi^{*},x(0)\right\rangle $.
For the special case \eqref{prop-delay-scalar} when the delay is
a discrete delay at $-\tau$, \eqref{c-value-distributed} reduces
to
\begin{equation}
c=\frac{1}{1+\tau\left\langle \pi^{*},\mathbf{d}\right\rangle }\left\langle \pi^{*},x(0)+D{\displaystyle \int_{-\tau}^{0}}x(s)\, ds\right\rangle .\label{c-value-discrete}
\end{equation}
In both \eqref{c-value-distributed} and \eqref{c-value-discrete},
we once again have probabilistic interpretations of the consensus
value afforded by the invariant distribution $\pi^{*}$. The quantity
$\left\langle \pi^{*},\mathbf{d}\right\rangle $, for instance, is
the mean degree with respect to the invariant distribution. In general,
the consensus value $c$ represents an appropriate mean of the initial
data of the system, over an interval of length $\tau$, calculated
and properly normalized with respect to $\pi^{*}$.

\section{Information processing delays}

\label{sec:processing} We now consider the consensus problem under
information processing delays:

\begin{equation}
\dot{x}(t)=-L\int_{-\tau}^{0}x(t+\theta)\, d\eta(\theta)\label{proc-distdelay-vec-1}
\end{equation}
In contrast to previous sections, convergence to consensus for \eqref{proc-distdelay-vec-1}
is not guaranteed. For systems of the form \eqref{proc-distdelay-vec-1}
one has that stability persists for sufficiently small delays and
is typically lost for sufficiently large delays. This observation
becomes intuitively clear if one thinks in terms of particular applications.
For instance, for the traffic model \eqref{traffic} it is reasonable
to expect that large driver reaction times jeopardize stability. Mathematically,
however, finding exact boundaries between stability and instability
in terms of system parameters can be quite involved. The problem has
been studied in \cite{SIAP07,michiels-sicon09} for certain choices
of the delay distribution $\eta$. Here we will not pursue a complete
analysis but indicate some similarities and differences with Sections
\ref{sec:undelayed} and \ref{sec:propagation}.

We first note from \eqref{proc-distdelay-vec-1} that
\[
\frac{d}{dt}\left\langle \pi^{*},x(t)\right\rangle =-\pi^{*}L\int_{-\tau}^{0}x(s)\, d\eta(s)=0.
\]
Thus $\left\langle \pi^{*},x(t)\right\rangle $ is a conserved quantity
(cf. \eqref{eq:conserved}). If the system converges to consensus,
i.e., $x(t)\to c\mathbf{1}$, then 
\[
\left\langle \pi^{*},x(0)\right\rangle =\left\langle \pi^{*},x(t)\right\rangle =\left\langle \pi^{*},c\mathbf{1}\right\rangle =c.
\]
Hence, the consensus value is $c=\left\langle \pi^{*},x(0)\right\rangle $,
as in the undelayed case \eqref{limiting-value}. 

Actual convergence to consensus depends on the characteristic values
of \eqref{proc-distdelay-vec-1}, which are given by the roots of
the characteristic equation
\begin{equation}
\hat{\chi}(s)=\det(sI+F(s)L)\label{chareq2}
\end{equation}
where $F(s)$ is given by \eqref{F-Laplace} as before.
\begin{prop}
Zero is always a characteristic root of \eqref{chareq2}; furthermore,
it is a simple root if zero is a simple eigenvalue of $L$.\end{prop}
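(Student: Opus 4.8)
The plan is to reuse the argument from the proof of Proposition~\ref{prop:conv}, keeping only the part that concerns the root at the origin, since for the processing-delay system \eqref{proc-distdelay-vec-1} we neither claim nor need any control over the remaining characteristic roots. First I would verify that $s=0$ is a root of \eqref{chareq2}: because $F(0)=1$ by the normalization \eqref{normalization}, one has $\hat\chi(0)=\det\bigl(0\cdot I+F(0)L\bigr)=\det L$, which vanishes since $L$ has zero row sums by the first statement of Lemma~\ref{lem:L}, equivalently since $\mathbf{1}$ lies in its kernel. This gives the first assertion unconditionally, for every admissible $\eta$.

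For the simplicity claim I would compute $\hat\chi'(0)$ under hypothesis (H1) and show it does not vanish. Since $F$, defined by \eqref{F-Laplace}, is the Laplace--Stieltjes transform of a function of bounded variation on a compact interval, it extends to an entire function, so $\hat\chi$ is entire and Jacobi's rule applies. Writing $B(s)=sI+F(s)L$, so that $B(0)=L$ and $B'(s)=I+F'(s)L$, this gives
\[
\hat\chi'(0)=\mathrm{tr}\bigl[\mathrm{adj}(L)\,(I+F'(0)L)\bigr]=\mathrm{tr}\bigl[\mathrm{adj}(L)\bigr]+F'(0)\,\mathrm{tr}\bigl[\mathrm{adj}(L)\,L\bigr].
\]
The second term drops out because $\mathrm{adj}(L)\,L=(\det L)I=0$, so in fact $\hat\chi'(0)=\mathrm{tr}[\mathrm{adj}(L)]$, and the mean delay $\bar\tau=-F'(0)$ (see \eqref{eq:F'(0)}) plays no role whatsoever. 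Now Lemma~\ref{lem:adj}, which is precisely where the hypothesis that $0$ is a simple eigenvalue of $L$ enters, yields $\mathrm{adj}(L)=\alpha(\mathbf{1}\pi^{*})$ with $\alpha\neq0$; taking traces and using the normalization $\langle\pi^{*},\mathbf{1}\rangle=1$ gives $\hat\chi'(0)=\alpha\langle\pi^{*},\mathbf{1}\rangle=\alpha\neq0$. Hence $0$ is a simple zero of $\hat\chi$.

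I do not anticipate a genuine obstacle: the computation is a stripped-down version of the one already carried out in Proposition~\ref{prop:conv}. The one point deserving a line of justification is the analyticity of $\hat\chi$ at the origin, which is what makes the notion of a \emph{simple} root meaningful and legitimizes Jacobi's rule; this follows at once from the fact that $F$ extends to an entire function. It is also worth emphasizing in the write-up that, in contrast with Section~\ref{sec:propagation}, nothing is claimed about the real parts of the other roots of \eqref{chareq2}---and in general no such claim holds---which is exactly the reason why convergence to consensus for \eqref{proc-distdelay-vec-1} cannot be taken for granted.
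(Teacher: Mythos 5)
Your proposal is correct and follows essentially the same route as the paper: check $\hat\chi(0)=\det L=0$, then apply Jacobi's rule and Lemma~\ref{lem:adj} to get $\hat\chi'(0)=\mathrm{tr}[\mathrm{adj}(L)]=\alpha\neq0$, the term $F'(0)\,\mathrm{tr}[\mathrm{adj}(L)L]$ vanishing because $\mathrm{adj}(L)L=(\det L)I=0$. The added remarks on the entirety of $F$ and the absence of any claim about the other roots are accurate but not part of the paper's argument.
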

\begin{proof}
$\hat{\chi}(0)=\det(L)=0$, so zero is a characteristic root. Following
the lines of the proof of Proposition \ref{prop:conv}, we evaluate
$\hat{\chi}'(0)$ using Jacobi's formula and Lemma \ref{lem:adj}.
Hence, when zero is a simple eigenvalue of the Laplacian, 
\begin{align*}
\hat{\chi}'(0) & =\mathrm{tr}[\mathrm{adj}(L)\cdot(I+F'(0)L)]\\
 & =\mathrm{tr}[\mathrm{adj}(L)]=\alpha\neq0.
\end{align*}
Thus, zero is a simple root of the characteristic equation \eqref{chareq2}.
\end{proof}
To ensure convergence to consensus, the real parts of all the nonzero
characteristic roots must be negative. From (\ref{chareq2}) one has
that if $s$ is a characteristic root then the quantity $-\frac{s}{F(s)}$
equals some eigenvalue $\lambda_{i}$ of the Laplacian $L$. Thus,
the problem can be reduced to a set of scalar equations
\begin{equation}
\frac{s}{F(s)}=-\lambda_{i},\quad i=2,\dots,n.\label{chareq-scalar}
\end{equation}
Stability of characteristic equations of the form \eqref{chareq-scalar}
has been studied for special cases of $F$. One of the earliest treatments
is due to Hayes \cite{Hayes50}, and applies to the case of real-valued
right hand side and a discrete delay at $\tau>0$, i.e.~for $F(s)=e^{-s\tau}$.
In this case, Theorem 1 in \cite{Hayes50} implies that $\Re(s)<0$
for all solutions $s$ of \eqref{chareq-scalar} if and only if $0<\lambda_{i}\tau<\frac{\pi}{2}$.
Applying this result to the consensus problem \eqref{proc-delay-scalar},
and in view of the foregoing arguments, the following result is obtained
(see also \cite{olfati-saber-tac04}). 
\begin{thm}
\label{thm:proc} Consider the system \eqref{proc-delay-scalar} and
suppose the coupling is symmetric, i.e.~$a_{ij}=a_{ji}$ $\forall i,j$.
Assume further that zero is a simple eigenvalue of the Laplacian $L$.
Then \eqref{proc-delay-scalar} reaches consensus from arbitrary initial
conditions if and only if 
\[
0\le\tau<\frac{\pi}{2\max_{i}\{\lambda_{i}\}}.
\]
In particular, a sufficient condition for reaching consensus is
\begin{equation}
0\le\tau<\frac{\pi}{4\Delta}.\label{suff}
\end{equation}
 Moreover, the consensus value is given by $c=\left\langle \pi^{*},x(0)\right\rangle $.
\end{thm}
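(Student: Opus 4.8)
The plan is to combine the three ingredients already assembled in this section: the reduction of the characteristic equation to scalar form, Hayes' theorem on the scalar equation, and the conserved quantity computed just before the statement. First I would invoke the preceding discussion: since the coupling is symmetric, $L$ is a real symmetric matrix, hence all its eigenvalues $\lambda_i$ are real, and by Lemma \ref{lem:L}(2) the nonzero ones satisfy $\lambda_i>0$. With a discrete delay at $-\tau$ we have $F(s)=e^{-s\tau}$, so the characteristic equation \eqref{chareq2} factors, via the spectral decomposition of $L$, into the zero root (simple, by the Proposition just proved, under hypothesis (H1)) together with the scalar equations \eqref{chareq-scalar}, namely $s\,e^{s\tau}=-\lambda_i$ for $i=2,\dots,n$.

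Next I would apply Hayes' result as quoted: for each $i$, every root $s$ of $s e^{s\tau}=-\lambda_i$ has $\Re(s)<0$ if and only if $0<\lambda_i\tau<\pi/2$, i.e. $\tau<\pi/(2\lambda_i)$. Taking the conjunction over $i=2,\dots,n$, all nonzero characteristic roots of \eqref{proc-delay-scalar} lie in the open left half-plane precisely when $0\le\tau<\pi/(2\max_i\lambda_i)$ (the value $\tau=0$ being the undelayed case of Section \ref{sec:undelayed}). Since the solution of a linear delay-differential equation is a superposition of modes $e^{st}$ over characteristic roots, this spectral condition is equivalent to convergence of every solution onto $\mathrm{span}(\mathbf{1})$; and on that subspace \eqref{proc-distdelay-vec-1} gives $\dot{x}\equiv0$, so the limit is some $c\mathbf{1}$ — that is, consensus holds iff $0\le\tau<\pi/(2\max_i\lambda_i)$. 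The sufficient condition \eqref{suff} then follows immediately from Lemma \ref{lem:L}(2): all $\lambda_i\in\mathcal{D}(\Delta,\Delta)$, so $\max_i\lambda_i\le 2\Delta$, whence $\pi/(4\Delta)\le\pi/(2\max_i\lambda_i)$. Finally, the consensus value $c=\langle\pi^*,x(0)\rangle$ is exactly the conclusion of the conserved-quantity computation displayed immediately before the theorem, using continuity of the scalar product to pass to the limit $x(t)\to c\mathbf{1}$.

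The one point requiring a little care — the "main obstacle," though a mild one — is the rigorous passage from the location of characteristic roots to the asymptotic behavior of solutions in the infinite-dimensional setting $\mathcal{C}=C([-\tau,0],\mathbb{R}^n)$. Strictly, one needs that (i) there are only finitely many roots in any right half-plane, (ii) the spectral projection onto the zero eigenmode reproduces the $\mathbf{1}$-direction, and (iii) the semigroup $T(t)$ decays on the complementary subspace at a rate governed by the supremum of the real parts of the remaining roots, which is strictly negative. These are standard facts from the theory of linear functional differential equations; I would simply cite the relevant reference (as the paper does for the alternative argument in Section \ref{sec:propagation}) rather than reproving them. Everything else is bookkeeping: the eigenvalue reality from symmetry, the factorization of the determinant, the quotation of Hayes' theorem, and the Gershgorin bound $\max_i\lambda_i\le2\Delta$ for the sufficient condition.
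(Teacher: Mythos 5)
Your proposal is correct and follows essentially the same route as the paper: reduction of the characteristic equation to the scalar equations \eqref{chareq-scalar} via the real spectrum of the symmetric Laplacian, application of Hayes' theorem with $F(s)=e^{-s\tau}$, the Gershgorin bound $\max_i\lambda_i\le 2\Delta$ for the sufficient condition, and the conserved quantity $\left\langle \pi^{*},x(t)\right\rangle$ for the consensus value. If anything, your treatment is more explicit than the paper's (which leaves the assembly of these ingredients implicit), particularly in flagging the semigroup-theoretic step needed to pass from root locations to asymptotics of solutions.
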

The sufficient condition \eqref{suff} follows from the second statement
of Lemma~\ref{lem:L}, which implies that $0\le\lambda_{i}\le2\Delta$,
$\forall i$. 

Theorem \ref{thm:proc} shows that the consensus problem for sufficiently
small processing delays are similar to the undelayed case, whereas
for large delays consensus from arbitrary initial conditions is not
possible. Similar statements have been obtained for uniform and gamma-distributed
delays with dead time \cite{SIAP07}. An interesting result is that
consensus may be obtained for distributed delays with mean $\tau$
even when it is not possible with a discrete delay at $\tau$ \cite{SIAP07}.
This stabilizing effect of delay distributions is analyzed in depth
in \cite{DCDS08}.

\section{Concluding remarks}

Consensus is a special case of synchronization: A coupled system of
dynamical variables $x_{i}$ is said to \emph{synchronize }if $\lim_{t\to\infty}|x_{i}(t)-x_{j}(t)|=0$
for all pairs $i,j$, without requiring that each $x_{i}$ necessarily
converge to a constant as in the consensus problem. The main notion
is nevertheless similar and involves the convergence of the system
to the one-dimensional subspace spanned by the vector $\mathbf{1}$.
Thus, the synchronized system exhibits solutions of the form $s(t)\mathbf{1}$
for some function $s(t)$. The form of the consensus models \eqref{cont}--\eqref{disc}
implies that $s(t)$ must be a constant. However, in other systems
where the right hand sides have a different form, say,
\begin{equation}
f(x_{i}(t))+\sum_{j\in V}a_{ij}g(x_{j}(t),x_{i}(t)),\label{general-rhs}
\end{equation}
a synchronized solution, if it exists, must obey $\dot{s}(t)=f(s(t))$
or $s(t+1)=f(s(t))$, which can be a complicated trajectory in high
dimensions, or even in one-dimension in discrete-time systems (see
\cite{Nonlin09} for a general discussion). The convergence analysis
proceeds by studying the linear variational equation around the synchronized
solution $s(t)\mathbf{1}$, which yields a \emph{non-autonomous} linear
system, as the main difference with the present paper. 

A related class of problems deals with the situation when the interaction
structure described by the $a_{ij}$ varies in time. Practical examples
involve units moving in space where obstacles may hinder communication,
failure and recovery of nodes or links in networks, changing social
relations, and so on. In such cases the Laplacian $L=L(t)$ is a time-varying
matrix, obeying some deterministic or stochastic dynamical rule. References
\cite{SIMA07,EPJ08} give a synchronization analysis in such systems
based on the probabilistic concept of Hajnal diameter \cite{Hajnal56,Hajnal58}
for infinite matrix sequences. Reference \cite{NHM11} studies discrete-time
consensus when the time dependence is driven by a Markovian jump process
and, in addition, the time delays are allowed to vary across the network
and over time, i.e., $\tau=\tau_{ij}(t)$. Similar to the spirit of
the present paper, these works provide further examples of employing
notions from stochastic analysis for the study of deterministic systems.

\subsection*{Acknowledgement}

This work was supported by the European Union within its 7th Framework
Programme under grant agreement \#318723 (MatheMACS).


\end{document}